\newtheorem{theorem}{Theorem}[section]
\newtheorem{lemma}[theorem]{Lemma}
\newtheorem{proposition}[theorem]{Proposition}
\theoremstyle{definition}     
\newtheorem{definition}[theorem]{Definition}
\numberwithin{equation}{section}
\def \hd #1 {\bfseries #1  \mdseries}
\def \italic #1 {\bfseries \it #1 \rm \mdseries}
\def \ra {\rightarrow}
\def \cen #1 { \begin{center} #1 \end{center}}
\def \mbz {\mathbb Z}
\def \mbp {\mathbb P}
\def \mbq  {\mathbb {Q}}
\def \mco  {\mathcal {O}}
\def \Q {${\mathbb {Q}}\,$}
\def \Pic {{\rm{Pic}}}
\def \Sing {{\rm{Sing}}}
\begin{document}
\title
{Calabi--Yau double coverings of Fano--Enriques threefolds}

\author{Nam-Hoon Lee}
\address{
Department of Mathematics Education, Hongik University
42-1, Sangsu-Dong, Mapo-Gu, Seoul 121-791, Korea
}
\email{nhlee@hongik.ac.kr}
\address{School of Mathematics, Korea Institute for Advanced Study, Dongdaemun-gu, Seoul 130-722, South Korea }
\email{nhlee@kias.re.kr}
\subjclass[2010]{14J30,  14J32, 14J45}
\keywords{Fano--Enriques threefold, Calabi--Yau threefold, Fano threefold}
\begin{abstract}This note is a report on the observation that  the Enriques--Fano threefolds with  terminal cyclic quotient
singularities admit
Calabi--Yau threefolds as their double coverings. We calculate the invariants of those Calabi--Yau threefolds when the Picard number is one. It turns out that all of them are new examples.
\end{abstract}
\maketitle
\section{Introduction}
The threefolds whose hyperplane sections are Enriques surfaces were studied by G.\ Fano in a famous paper \cite{Fa}. The modern proofs for the results of  \cite{Fa} were given in \cite{CoMu}.
Such varieties are always singular and their canonical divisors are not Cartier but numerically equivalent to Cartier divisors. We call such threefolds \emph{Fano--Enriques threefolds} (see Definition \ref{def1}).
In this note, we consider Fano--Enriques threefolds whose singularities are  terminal cyclic quotient
ones. It is worth noting that any
Fano--Enriques threefold with  terminal singularities admits a \Q-smoothing to one with terminal cyclic quotient
singularities (\cite{Mi}).
The canonical coverings (which are double-coverings) of Enriques--Fano threefolds with terminal cyclic quotient singularities are smooth Fano threefolds (\cite{Ba, Sa}).  Hence all the singular points of  such Enriques--Fano threefolds are of type $\frac{1}{2} (1, 1, 1)$. Using the classification of smooth Fano threefolds,  L.\  Bayle (\cite{Ba}) and T.\ Sano (\cite{Sa}) gave a classification of such threefolds.
In this note, we observe that all those Enriques--Fano threefolds also admit some Calabi--Yau threefolds as their double covering, branched along some smooth surfaces and those eight singularities.
A \italic{Calabi--Yau threefold $Y$} is a compact K\"ahler manifold with
trivial canonical class such that the intermediate cohomology groups of
its structure sheaf are trivial  ($h^1 (Y, \mco_Y) = h^2(Y, \mco_Y) =0$).
We calculate the invariants of those Calabi--Yau double coverings when their Picard numbers are one
(Table \ref{t1}).
It turns out that all those Calabi--Yau threefolds are new examples.
Although a number of Calabi--Yau threefolds have been constructed, those with Picard number one are still
quite rare. Note that they are primitive and play
an important role in the moduli spaces of all Calabi--Yau threefolds (\cite{Gr}).

\section{Calabi--Yau double coverings}
As the higher dimensional algebraic geometry being developed, the definition of Fano--Enriques threefolds also has evolved and has been generalized.
We adapt the following version of the definition.
\begin{definition} \label{def1}
A three-dimensional normal projective variety $W$ is called a Fano--Enriques threefold if $W$ has canonical singularities, $-K_W$ is not a Cartier divisor but numerically equivalent to an ample Cartier divisor $H_W$.
\end{definition}
Y.\ Prokhorov proved in \cite {Pr} that the generic surface in the linear system $|H_W|$ is an Enriques surface with canonical singularities and that the Enriques surface is smooth if the singularities of $W$ is isolated and $-K_W^2 \neq 2$. We refer to \cite {Ch1, GLMR, Pr1} for more systematic expositions of Fano--Enriques threefolds.
In this note, we consider the case that $W$ has only terminal cyclic quotient singularities.
We summarize the properties of $W$ (\cite{Ba, CoMu, Fa, Sa}).
\begin{enumerate}
\item All the singularities of $W$  are  the type of $\frac{1}{2}(1,1,1)$.
\item The number of singularities of $W$ is eight.
\item $-2K_W$ is linearly equivalent to $-2H_W$.
\item There is a smooth Fano threefold that covers doubly $W$, branched only at the singularities of $W$.
\end{enumerate}
L.\ Bayle (\cite{Ba}) and T.\ Sano (\cite{Sa}) gave a classification of  smooth Fano threefolds that double cover Fano--Enriques threefolds.

Let  $\varphi:X \ra W$ be the double covering, branched along the singularities of $W$. Then $X$ is one of smooth Fano threefolds in Theorem 1.1 in \cite{Sa}.
We want to find a Calabi--Yau threefold that  double-covers $W$, using the following theorem which is a special case of Theorem 1.1 in \cite{Lee1}.
\begin{theorem} \label{leethm}
Let $W$ be a projective three-dimensional variety with singularities of type $\frac{1}{2}(1,1,1)$ such that $h^1(W, \mco_W) = h^2(W, \mco_W)=0$. Suppose that the linear system $|-2K_W|$ contains a smooth surface $S$, then there is Calabi--Yau threefold $Y$ that is a double covering of $W$ with the branch locus $S \cup \Sing(W)$.
\end{theorem}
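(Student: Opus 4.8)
The plan is to produce $Y$ as a degree-two cyclic cover of $W$ built from the half-anticanonical sheaf. Because every singularity of $W$ is of type $\frac12(1,1,1)$, the variety is $\mbq$-Gorenstein with $2K_W$ Cartier, so the rank-one reflexive sheaf $\mathcal{L}:=\mco_W(-K_W)$ has reflexive square $\mathcal{L}^{[2]}\cong\mco_W(-2K_W)$, an honest line bundle. The smooth surface $S\in|-2K_W|$ is the zero locus of a section $s\in H^0(W,\mathcal{L}^{[2]})$, and I would define
$$
\pi\colon Y:=\operatorname{Spec}_W\!\big(\mco_W\oplus\mathcal{L}^{[-1]}\big)\ra W,
$$
with $\mco_W$-algebra structure in which the product on the summand $\mathcal{L}^{[-1]}$ is the composite $\mathcal{L}^{[-1]}\otimes\mathcal{L}^{[-1]}\to\mathcal{L}^{[-2]}\to\mco_W$ determined by $s$. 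This is the usual cyclic-cover recipe, phrased reflexively so as to absorb the non-Cartier divisor $-K_W$; by construction $\pi$ is finite of degree two, and $Y$ is normal, projective, and connected since $S\neq\emptyset$.

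First I would check that $Y$ is smooth, splitting $W$ into three loci. Away from $S\cup\Sing(W)$ the section $s$ is a unit and $\mathcal{L}$ is locally free, so $\pi$ is étale and $Y$ is smooth. Along $S$ the cover is the classical double cover branched over a smooth divisor, hence smooth; here I would first note that $S$, being a smooth surface, cannot contain any $\frac12(1,1,1)$ point, since $S$ is a Cartier divisor and so a point $p\in S\cap\Sing(W)$ would give $\mco_{S,p}=\mco_{W,p}/(f)$ of embedding dimension at least $6-1=5>2=\dim S$, contradicting smoothness. The crux is the behaviour over the eight points of $\Sing(W)$: there $s$ is a unit while $\mathcal{L}^{[-1]}=\omega_W$ is precisely the nontrivial generator of the local class group $\mbz/2$, so $\operatorname{Spec}(\mco_W\oplus\mathcal{L}^{[-1]})$ reproduces the index-one (canonical) cover $\mbc^3\to\mbc^3/\{\pm1\}$, which is smooth and ramified exactly over the singular point. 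Establishing this local identification — recognizing $\mathcal{L}^{[-1]}_p$ as the anti-invariant summand and $s$ as the trivialization of its reflexive square compatible with the even--odd multiplication, so that the reflexive cyclic cover \emph{resolves} the quotient singularity rather than merely branching over it — is the step I expect to require the most care.

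With smoothness in hand the remaining invariants are formal. The relative dualizing sheaf is $\omega_{Y/W}=\pi^{*}\mathcal{L}$, so
$$
\omega_Y=\pi^{*}\!\big(\omega_W\otimes\mathcal{L}\big)=\pi^{*}\!\big(\omega_W\otimes\omega_W^{[-1]}\big)=\mco_Y,
$$
giving $K_Y=0$; equivalently the branch surface $S\sim-2K_W$ forces the ramification to cancel $\pi^{*}K_W$. For the structure sheaf I would use the splitting $\pi_{*}\mco_Y=\mco_W\oplus\omega_W$, whence $H^i(Y,\mco_Y)=H^i(W,\mco_W)\oplus H^i(W,\omega_W)$. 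The first summand vanishes for $i=1,2$ by hypothesis, and the second vanishes there as well, since Serre duality on the Cohen--Macaulay variety $W$ identifies $H^i(W,\omega_W)$ with $H^{3-i}(W,\mco_W)^{\vee}$. Thus $h^1(Y,\mco_Y)=h^2(Y,\mco_Y)=0$, and as $Y$ is a smooth projective threefold it is Kähler; collecting these facts shows that $Y$ is a Calabi--Yau threefold double covering $W$ with branch locus $S\cup\Sing(W)$, as required.
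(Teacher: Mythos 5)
Your construction is correct, but note that the paper does not actually prove Theorem \ref{leethm} at all: it is quoted as a special case of Theorem 1.1 of \cite{Lee1}, so your argument is a self-contained substitute for an external citation rather than an alternative to an in-paper proof. What you supply is essentially the construction that reference carries out: the degree-two cyclic cover $\operatorname{Spec}_W(\mathcal{O}_W\oplus\omega_W)$ with multiplication twisted by the section $s$ cutting out $S$, which away from $\Sing(W)$ is the classical double cover branched along the smooth divisor $S$ and near each $\frac12(1,1,1)$ point (where $s$ is a unit, since your embedding-dimension argument correctly shows $S$ misses $\Sing(W)$) degenerates to the index-one cover $\mathbb{C}^3\to\mathbb{C}^3/\{\pm1\}$; you rightly identify that local identification as the only delicate point, and your sketch of it is sound because any two local trivializations of $\omega_W^{[2]}$ differ by a unit admitting a square root analytically-locally. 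The one step I would tighten is the line $\omega_{Y/W}=\pi^{*}\mathcal{L}$: at the eight singular points $\mathcal{L}$ is not locally free and the naive pullback formula is not literally meaningful there, but the conclusion $\omega_Y\cong\mathcal{O}_Y$ follows either by observing that a line bundle on the smooth $Y$ is determined by its restriction to the complement of a codimension-three set, or by Hurwitz ($K_Y=\pi^{*}K_W+S_Y$ with $S_Y=\pi^{*}(-K_W)$ as $\mathbb{Q}$-divisors, the cover being crepant and \'etale in codimension one over the singular points). The Serre-duality step $H^i(W,\omega_W)\cong H^{3-i}(W,\mathcal{O}_W)^{\vee}$ is legitimate since quotient singularities are Cohen--Macaulay, and your connectedness argument via ramification along $S\neq\emptyset$ is fine; so, modulo that small repair, the proposal is a complete proof.
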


Let $p_1, p_2$ be any points of $X$.
From the description of those Fano threefolds $X$'s in Theorem 1.1 of \cite{Sa},
one can find  an effective divisor $D$ from $|-K_X|$ such that $D$ does not contain $p_1, p_2$.
Let $\theta$ be the covering involution on $X$, i.e.\ the quotient $X/\langle \theta \rangle $ is  $W$.
Therefore, for any point $q \in W$ and we can find an effective divisor $D$ in the linear system $|-K_X|$ such that $D \cap \varphi^{-1}(\{q \}) = \emptyset$.  Note the effective divisor $$\varphi(D)+\varphi(\theta(D))$$
 belongs to the linear system $|-2K_W|$ and it does not contain the point $q$. So the linear system $|-2K_W|$ is base-point-free and we can find a  smooth surface $S$ from it.
Hence, by Theorem \ref{leethm}, there is a Calabi--Yau threefold $Y$ that covers doubly $W$, branched along $S$ and singularities of $W$.

 Since $H^i(X, \mco_X)=0$ and $H^i(W, \mco_W)=0$ for $i=1,2$, we have isomorphisms
 $$H^2(X, \mbz) \simeq \Pic(X), H^2(W, \mbz) \simeq \Pic(W)$$
 by the exponential sequences.
 Hence we can regard  classes of Cartier divisors of $X, W$ as elements of $H^2(X, \mbz), H^2(W, \mbz)$ respectively.
Let $r$ be the index of Fano threefold $X$ (i.e.\ the largest integer $r$ such that $-K_X = r H_X$ for some ample divisor $H_X$ of $X$).

Now we calculate the invariants of $Y$.
For a double covering with dimension higher than two, it is a non-trivial task to
calculate the topological invariants even in the case that the base of the covering is smooth.
In our case, $S$ is an ample divisor of $W$, so it may be worth trying to apply the Lefschetz hyperplane theorem. However $W$ is not smooth, so the usual  Lefschetz hyperplane theorem does not apply here.
There are other versions of  the Lefschetz hyperplane theorem for singular varieties but they all require that  $W-S$ is smooth, which is not true for our case.
We prove a type of the Lefschetz hyperplane theorem for $S \subset W$.
We say that an element $\alpha$ of an additive Abelian group $G$ is divisible by an integer $k$ if $\alpha = k \alpha'$ for some element $\alpha' \in G$. $\alpha$ is said to be primitive if it is divisibly by only $\pm 1$. We denote the quotient of $G$ by its torsion part as $G_f$.
\begin{lemma} \label{lem1}
The map $H^2(W, \mbq) \ra H^2(S, \mbq)$, induced by the inclusion $S \hookrightarrow W$, is injective and the image $H_W|_S$  in $ H^2(S, \mbz)_f$ of $H_W \in H^2(W, \mbz)$  is divisible by $r$.
\end{lemma}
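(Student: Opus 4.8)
The plan is to transport everything to the smooth Fano threefold $X$, where the ordinary Lefschetz theorem is available, and then to descend along the double covering. Set $T = \varphi^{-1}(S)$. A general member of the base-point-free system $|-2K_W|$ misses the eight points of $\Sing(W)$, so $S$ lies in the smooth locus and $\varphi$ is étale over a neighborhood of $S$; hence $T$ is a smooth surface and $\pi := \varphi|_T : T \ra S$ is an étale double covering with free involution $\sigma := \theta|_T$. Since the branch locus of $\varphi$ has codimension three, $\varphi$ is unramified in codimension one, so $\varphi^*(2K_W) = 2K_X$ and therefore $\mco_X(T) = \varphi^*\mco_W(-2K_W) = \mco_X(-2K_X) = \mco_X(2rH_X)$; thus $T \in |-2K_X| = |2rH_X|$ is a smooth ample divisor. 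The Lefschetz theorem on $X$ then gives $\pi_1(T) \simeq \pi_1(X) = 1$, so $T$ is connected and simply connected and $\Pic(T) = H^2(T, \mbz)$ is torsion-free.

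For injectivity I would use that the rational cohomology of a finite quotient is the invariant subspace: $H^2(W, \mbq) = H^2(X, \mbq)^\theta$ and, as $\sigma$ acts freely, $H^2(S, \mbq) = H^2(T, \mbq)^\sigma$. Under these identifications the restriction $H^2(W, \mbq) \ra H^2(S, \mbq)$ is exactly the restriction to the invariant subspaces of $H^2(X, \mbq) \ra H^2(T, \mbq)$, and the latter is injective by the Lefschetz hyperplane theorem for the smooth ample divisor $T \subset X$. Injectivity on the invariant parts is then immediate, which gives the first assertion.

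For the divisibility I would argue with line bundles. Adjunction on $X$ gives $\mco_T(K_T) = \mco_X(-K_X)|_T = M^{\otimes r}$, where $M := \mco_X(H_X)|_T$. Because $\theta$ fixes $K_X$, because $-K_X = rH_X$, and because $\Pic(X)$ is torsion-free, we get $\theta^*H_X = H_X$, so $M$ is $\sigma$-invariant, $\sigma^*M \simeq M$. Choosing an isomorphism $\sigma^*M \ra M$ and rescaling it by a square root in $\mbc^*$ of the resulting scalar produces a genuine $\sigma$-linearization, so $M$ descends to a line bundle $L$ on $S$ with $\pi^*L \simeq M$. Since $\pi$ is étale, $\pi^*\mco_S(K_S) = \mco_T(K_T) = M^{\otimes r} = \pi^*(L^{\otimes r})$, so $L^{\otimes r} \otimes \mco_S(-K_S)$ lies in $\ker \pi^*$ and is torsion. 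On the other hand, applying adjunction on the smooth locus $W \setminus \Sing(W) \supset S$ together with $-K_W \equiv H_W$ identifies $K_S = H_W|_S$ in $H^2(S, \mbz)_f$. Passing to $H^2(S, \mbz)_f$ therefore yields $H_W|_S = K_S = r\,[L]$, so $H_W|_S$ is divisible by $r$.

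The step I expect to be the crux is the integral descent of $M$ along $\pi$: a priori a $\sigma$-invariant line bundle need not come from $S$, the obstruction living in $H^2(\mbz/2, \mbc^*)$. The decisive point is that this group vanishes—concretely, one extracts a square root in $\mbc^*$ to fix the linearization—so every invariant line bundle descends, the only ambiguity ($\ker \pi^*$) being the $2$-torsion $H^1(\mbz/2, \mbc^*) = \mbz/2$. This is exactly what makes the divisibility hold for every value of the index $r$, including the even ones, where a mere transfer argument would only control $2H_W|_S$. I would also verify the two compatibilities used above: that $\varphi$ is unramified in codimension one, so that $\varphi^*(-2K_W) = -2K_X$, and that the non-Cartier locus of $K_W$ is disjoint from $S$, so that adjunction on $W$ computes $K_S$ verbatim.
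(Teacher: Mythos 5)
Your proof is correct, and on the divisibility half it takes a genuinely different route from the paper's. For injectivity the two arguments are the same in substance: both reduce to the Lefschetz hyperplane theorem for the smooth ample divisor $T=\varphi^{-1}(S)\subset X$, the paper by composing the injections $H^2(W,\mbq)\hookrightarrow H^2(X,\mbq)\hookrightarrow H^2(T,\mbq)$ in a commutative square, you by restricting to the invariant subspaces $H^2(W,\mbq)=H^2(X,\mbq)^\theta$ and $H^2(S,\mbq)=H^2(T,\mbq)^\sigma$. For divisibility the paper stays topological: it shows $H_X|_{T}$ is invariant in $H^2(T,\mbz)_f$, asserts that it therefore lies in the image of $(\varphi|_{T})^*$, writes $(\varphi|_T)^*(H_W|_S)=r\,H_X|_T$, and descends by injectivity of $(\varphi|_T)^*$ on $H^2(S,\mbz)_f$. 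You instead descend the line bundle $M=\mco_X(H_X)|_T$ itself to $S$ by repairing the $\sigma$-linearization with a square root in $\mbc^*$, and then identify $H_W|_S$ with $K_S=r[L]$ modulo torsion via adjunction and $-2K_W\sim 2H_W$. The step you flag as the crux --- that an invariant object on the cover actually comes from the base, the cokernel a priori being controlled only up to a factor of $2$ --- is precisely the step the paper passes over with a bare ``hence''; your $H^2(\mbz/2,\mbc^*)=1$ descent argument supplies it, whereas the paper's version is ultimately justified by the Cartan--Leray spectral sequence for the free involution, whose relevant differentials vanish because $\pi_1(T)=1$. So your route is longer but makes the delicate point explicit and works for every index $r$, even or odd. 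Two minor remarks: the equality $\Pic(T)=H^2(T,\mbz)$ is more than you need (and more than is immediate) --- only the torsion-free injection $\Pic(T)\hookrightarrow H^2(T,\mbz)$ matters, and in fact $\sigma^*M\simeq M$ already follows by restricting $\theta^*H_X=H_X$ from $\Pic(X)$; and in the final identification it is cleaner to invoke the linear equivalence $-2K_W\sim 2H_W$ rather than the numerical equivalence $-K_W\equiv H_W$, since the discrepancy is then visibly $2$-torsion and dies in $H^2(S,\mbz)_f$.
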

\begin{proof}
Consider the commutative diagram:

$$
\xymatrix{
 X \ar[r]^{ \varphi}    &   W \\
S_X     \ar[r]^{\varphi|_{S_X}}   \ar@{^{(}->}[u] & S \ar@{^{(}->}[u]
}
$$
where $S_X=\varphi^{-1}(S)$ and the vertical maps are inclusions.
Note
$$\varphi|_{S_X}:S_X \ra S$$
is an unramified double covering.
 We have an induced commutative diagram:
$$
\xymatrix{
 H^2(X, \mbq)   \ar[d] &  H^2 (W, \mbq) \ar[d]  \ar[l]^{ \varphi^*} \\
H^2(S_X, \mbq)        & H^2(S, \mbq)  \ar[l]^{{(\varphi|_{S_X})}^*}
}
$$
Note the pull-backs $\varphi^*:  H^2(W, \mbq)   \ra  H^2 (X, \mbq)$ is injective.
Since $S_X$ is a smooth ample divisor of $X$, the map
$H^2(X, \mbq)   \ra H^2(S_X, \mbq)$
is injective by the Lefschetz hyperplane theorem. So we have the injectivity of the map
$$H^2(W, \mbq) \ra H^2(S, \mbq).$$

Consider another commutative diagram.
$$
\xymatrix{
 H^2(X, \mbz)   \ar[d] &  H^2 (W, \mbz) \ar[d]  \ar[l]^{ \varphi^*} \\
H^2(S_X, \mbz)_f        & H^2(S, \mbz)_f  \ar[l]^{{(\varphi|_{S_X})}^*}
}
$$

Note $-K_X = r H_X$.
We note that $\theta^* (-K_X) = -K_X$ in $H^2(X, \mbz)$. Since $H^2(X, \mbz)$ has no torsion, $\theta^* (H_X) = H_X$ in $H^2(X, \mbz)$ and so
$$(\theta|_{S_X})^*(H_X|_{S_X}) = \theta^*(H_X)|_{S_X} =H_X|_{S_X}$$
 in $H^2(S_X, \mbz)_f$. Hence  $h':=H_X|_{S_X}$ lies in the image of
the map
$$H^2(S, \mbz)_f   \ra H^2(S_X, \mbz)_f.$$

Note $\varphi^*(H_W) = -K_X = r H_X$. Hence
$$(\varphi|_{S_X})^*(H_W|_S)= \varphi^*(H_W)|_{S_X} =rH_X|_{S_X} = r h'$$
 is divisible by $r$ in $H^2(S_X, \mbz)_f$.
Since the map
$$(\varphi|_{S_X})^*:H^2(S, \mbz)_f   \ra H^2(S_X, \mbz)_f$$
is injective,  $H_W|_S$ is divisible by $r$ in $H^2(S, \mbz)_f$.
\end{proof}
We note that $H_W$ is primitive in $H^2(W, \mbz)$. By the above lemma, $H_W|_S$ is not primitive in $H^2(S, \mbz)_f$ when $r > 1$. This is different from what the usual Lefschetz hyperplane theorem expects for smooth threefolds.

\begin{proposition} \label{prop1} We have
$$h^2(Y) \leq h^2(X),$$
$$e(Y) =  e(X)- 24-2(- K_X)^3$$
and
$$\psi^*(H_W) \cdot c_2(Y) = (-K_X)^3 + 24,$$
where $e(Y)$ is the topological Euler characteristic of $Y$ and $c_2(Y)$ is the second Chern class of $Y$.
\end{proposition}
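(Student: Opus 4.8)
The plan is to reduce every assertion to a computation on the smooth Fano threefold $X$ by inserting an auxiliary Calabi--Yau threefold $V$ that simultaneously covers $X$ and $Y$. Concretely, since $S_X := \varphi^{-1}(S) \in |-2K_X|$ is smooth (as in Lemma \ref{lem1}), I would form the double covering $\pi : V \to X$ branched along $S_X$. Writing $L = -K_X$, the adjunction formula for a branched double cover gives $K_V = \pi^*(K_X + L) = 0$, and from $\pi_* \mco_V = \mco_X \oplus \mco_X(K_X)$ together with $h^1(\mco_X) = h^2(\mco_X) = 0$ one gets $h^1(\mco_V) = h^2(\mco_V) = 0$, so $V$ is a Calabi--Yau threefold. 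The key geometric step, and the main obstacle, is to show that the covering involution $\theta$ of $\varphi : X \to W$ lifts to a fixed-point-free involution $\tau$ on $V$ with $V/\tau \cong Y$, so that $\rho : V \to Y$ is an \'etale double covering and $V \to W$ is a $(\mbz/2)^2$-covering whose three intermediate quotients are $X$, $Y$, and one further (singular) threefold. One must verify that $\tau$ is free: over $S$ it acts as the free deck transformation of $\varphi|_{S_X} : S_X \to S$, while over the eight points the free lift is precisely the one producing the \emph{smooth} quotient guaranteed by Theorem \ref{leethm} (the other lift creates $\frac12(1,1,1)$ singularities). I expect this matching to require the explicit cyclic-cover description together with a local analysis at the eight fixed points of $\theta$.

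Granting $V$, the Euler number is immediate. As $\pi$ is a smooth double covering branched along the smooth surface $S_X$, we have $e(V) = 2e(X) - e(S_X)$, while $\rho$ \'etale gives $e(V) = 2e(Y)$. Computing $e(S_X) = \int_X \big(c_2(X) - c_1(X) S_X + S_X^2\big)\, S_X$ with $S_X = 2c_1(X)$ and using $c_1(X) c_2(X) = 24$ (valid since $\chi(\mco_X)=1$ for a Fano threefold) yields $e(S_X) = 48 + 4(-K_X)^3$. Combining, $e(Y) = e(X) - 24 - 2(-K_X)^3$. The same value can also be reached directly by stratifying $\psi : Y \to W$ over $W \setminus (S \cup \Sing(W))$, over $S$, and over $\Sing(W)$, and using $e(X) = 2e(W) - 8$.

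For the $c_2$ relation I would use that \'etale maps preserve Chern classes, so $c_2(V) = \rho^* c_2(Y)$, and that the Chern classes of a branched double cover satisfy $c_2(V) = \pi^*\big(c_2(X) + c_1(X)^2\big)$. Since $\varphi \circ \pi = \psi \circ \rho$, we have $\rho^* \psi^*(H_W) = \pi^* \varphi^*(H_W) = \pi^*(-K_X)$, and as $\rho$ has degree $2$,
$$2\,\big(\psi^*(H_W)\cdot c_2(Y)\big) = \pi^*(-K_X)\cdot c_2(V) = 2\int_X (-K_X)\big(c_2(X) + c_1(X)^2\big) = 2\big(24 + (-K_X)^3\big),$$
which gives $\psi^*(H_W)\cdot c_2(Y) = (-K_X)^3 + 24$.

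Finally, for $h^2(Y) \le h^2(X)$ I would run a representation-theoretic Lefschetz argument on the $(\mbz/2)^2$-action. Decomposing $H^2(V,\mbq)$ into its four characters, one identifies $H^2(X,\mbq) \cong H^2(V,\mbq)^\sigma$ (via $\pi^*$) and $H^2(Y,\mbq) \cong H^2(V,\mbq)^\tau$ (via $\rho^*$), where $\sigma$ is the involution of $\pi$. Restriction to $R := \mathrm{Fix}(\sigma) \cong S_X$ annihilates every $\sigma$-anti-invariant class, because $\sigma|_R = \mathrm{id}$. The Lefschetz hyperplane theorem, applied to the smooth ample divisors $S_X \subset X$ and $R_Y := \rho(R) \cong S \subset Y$, makes $H^2(X,\mbq) \to H^2(S_X,\mbq)$ and $H^2(Y,\mbq) \to H^2(R_Y,\mbq)$ injective; transporting both into $H^2(R,\mbq)$ via the isomorphism $\pi|_R$ and the injection $(\rho|_R)^*$, the image of $H^2(Y,\mbq)$ is forced to lie inside that of $H^2(X,\mbq)$, whence $h^2(Y) \le h^2(X)$. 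The subtlety is to confirm that $R_Y$ is a smooth ample divisor of $Y$ isomorphic to $S$, which follows from $\psi^* S = 2R_Y$ with $S$ ample and $\psi$ finite; in fact the same argument yields the stronger equality $h^2(Y) = h^2(W)$.
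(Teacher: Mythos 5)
Your overall strategy --- computing everything on a common smooth double cover of both $X$ and $Y$ --- is exactly the paper's, but you arrive at that cover differently, and two of your three computations take genuinely different (and correct) routes. The paper simply takes $\widetilde X$ to be the fiber product $Y\times_W X$, from which it is immediate that $\widetilde X\to Y$ is \'etale and $\widetilde X\to X$ is the double cover branched along $S_X\in|-2K_X|$; you instead build the cyclic cover $V\to X$ branched along $S_X$ and then must lift $\theta$ to a free involution $\tau$ with $V/\tau\cong Y$. That lifting-and-identification step, which you rightly flag as the main obstacle and do not carry out, is precisely what the fiber-product construction buys for free, so you should either supply the local analysis at the eight fixed points (together with an argument that $V/\tau$ really is the $Y$ of Theorem \ref{leethm}) or switch to the fiber product. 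Granting $V$, your Euler-number computation coincides with the paper's. For the $c_2$ identity the paper works downstairs, writing $\psi^*(H_W)\cdot c_2(Y)=S_Y\cdot c_2(Y)$ and applying adjunction to the smooth ample divisor $S_Y\subset Y$; you work upstairs with the branched-cover formula $c_2(V)=\pi^*\bigl(c_2(X)+c_1(X)^2\bigr)$ and \'etale descent $c_2(V)=\rho^*c_2(Y)$. Both are correct and give $(-K_X)^3+24$; your Chern-class formula checks out (it returns $c_2\cdot H=44$ for the octic double solid, for instance). For $h^2(Y)\le h^2(X)$ the paper cites Cynk's Lefschetz-type theorem to get $h^2(\widetilde X)=h^2(X)$ and then uses injectivity of pullback under the \'etale map; your equivariant argument --- $\sigma$-anti-invariant classes restrict to zero on $\mathrm{Fix}(\sigma)$, then Lefschetz on $S_X\subset X$ and on $\rho(R)\subset Y$ --- is self-contained and even yields the sharper statement $h^2(Y)=h^2(W)$, which the paper only extracts later (in the Picard-number-one case) in the final theorem.
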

\begin{proof}
Consider the
following fiber product of two double covers:
$$
\xymatrix{
& \widetilde X \ar[dr] \ar[dl]    &   \\
Y     \ar[dr]^{\psi}& & X \ar[dl]_\varphi\\
&W&
}
$$
Then it is easy to see that
\begin{itemize}
\item[(a)] $ \widetilde X   \ra Y$ is an \'etale double cover and
\item[(b)]  $\widetilde X  \ra X$ is the double cover branched along a member $\widetilde S $ of $| -2K_X|$.
\end{itemize}
Using the fact that $\widetilde S$ is an ample divisor of $\widetilde X$, one can show  $h^2(\widetilde X) = h^2(X)$ (\cite{Cy}).
Since $h^2(Y) \leq h^2(\widetilde X)$, we have $h^2(Y) \leq h^2(X).$

Note $e(X) = 2 e(W) - 8$ and $e(S_X) = 2 e(S)$.
Note $S_X \sim -2K_X$ and by the Riemann--Roch theorem,
$$1=\chi(X, \mco_X) = \frac{1}{24} c_2(X) \cdot (-K_X).$$
By the adjunction formula, we have
$$e(S_X) =  c_2(X) \cdot (-2K_X) +4(- K_X)^3 = 48+4(- K_X)^3 .$$
So
$$e(Y) = 2 e(W) - e(S) -8 = e(X)  - \frac{1}{2} e(S_X) = e(X)- 24-2(- K_X)^3.$$

Note $\psi^*(S) \sim 2 S_Y$ and $S \sim 2 H_W$, where $S_Y= \psi^{-1}(S)$. So
$ \psi^*(H_W) \cdot c_2(Y) =S_Y \cdot c_2(Y)$.
By the adjunction formula,
\begin{align*}
S_Y \cdot c_2(Y) &= - S_Y^3 + c_2(S_Y) \\
                 &= -\psi^*(H_W)^3 + e(S_Y) \\
                 &=  -2 H_W^3 + e(S)\\
                 &= -\varphi^*(H_W)^3 + \frac{1}{2} e(S_X) \\
                 &= -(-K_X)^3 + 24+2(- K_X)^3 =  (-K_X)^3 + 24.
\end{align*}
Hence $\psi^*(H_W) \cdot c_2(Y) = (-K_X)^3 + 24$.
\end{proof}

We are interested in the case that the Calabi--Yau threefold $Y$ has Picard number one.
Hence we assume that $X$ has Picard number one. There are four families of them:
\begin{enumerate}
\item[$X_1$:]  complete intersection of a quadric and a quartic in the weighed projective space  $\mbp(1,1,1,1,1,2)$, $r_1=1$, $-K_{X_1}^3=4$, $e(X_1) = -56$.
\item[$X_2$:]  complete intersection of three quadrics in $\mbp^6$, $r_2=1$,  $-K_{X_2}^3=8$, $e(X_2) = -24$.
\item[$X_3$:]  hypersurface of degree $4$ in $\mbp(1,1,1,1, 2)$, $r_3=2$, $-K_{X_3}^3=16$,
 $e(X_3) = -16$.
\item[$X_4$:]  compete intersection of two quadrics in $\mbp^5$, $r_4=2$, $-K_{X_4}^3=32$, 
$e(X_4) = 0$.
\end{enumerate}

\begin{theorem} Suppose that $X$ has Picard number one, then $W$, $Y$ have Picard number one,
$$H_Y^3 = \frac{1}{r^3} (-K_X^3)$$
and
$$ H_Y \cdot c_2(Y)  = \frac{1}{r}((-K_X)^3 + 24),$$
where $H_Y$ is an ample generator of $\Pic(Y)$.
\end{theorem}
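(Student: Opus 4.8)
The plan is to first settle the three Picard numbers and then to determine the single positive integer $m$ defined by $\psi^*(H_W) = m\,H_Y$, since once $m = r$ is known both displayed identities follow immediately. For the Picard numbers, I would note that $W$ and $Y$ each carry an ample divisor, so their Picard numbers are at least one, while $\rho(Y) \le h^2(Y) \le h^2(X) = 1$ by Proposition \ref{prop1} and $\rho(W) \le h^2(W) \le h^2(X) = 1$ because $\varphi^*\colon H^2(W,\mbq)\ra H^2(X,\mbq)$ is injective; hence all three equal one. Writing $\Pic(X)_f = \mbz H_X$, $\Pic(W)_f = \mbz H_W$, $\Pic(Y)_f = \mbz H_Y$ with $\varphi^*H_W = -K_X = rH_X$ and $\psi^*H_W = m\,H_Y$, the whole statement reduces to the claim $m = r$: indeed $\psi^*(H_W)^3 = 2H_W^3 = \varphi^*(H_W)^3 = (-K_X)^3$ then gives $r^3 H_Y^3 = (-K_X)^3$, and $\psi^*(H_W)\cdot c_2(Y) = (-K_X)^3 + 24$ from Proposition \ref{prop1} gives $r\,H_Y\cdot c_2(Y) = (-K_X)^3 + 24$.

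To control $m$ I would work on the fiber product $\widetilde X$ of Proposition \ref{prop1}. Since $\widetilde X \ra Y$ is \'etale and $Y$ is Calabi--Yau we have $K_{\widetilde X} = 0$, and $h^2(\widetilde X) = h^2(X) = 1$ forces $\rho(\widetilde X) = 1$; let $\widetilde H$ be the primitive ample generator of $\Pic(\widetilde X)_f$. Writing $\pi\colon\widetilde X\ra Y$ and $\sigma\colon\widetilde X\ra X$ for the two projections, the identity $\psi\pi = \varphi\sigma$ yields $m\,\pi^*H_Y = r\,\sigma^*H_X$ in $\Pic(\widetilde X)$. The conceptual heart of the argument is that the deck involution of the \emph{\'etale} cover $\pi$ acts freely, so every $\pi$-invariant line bundle descends (the obstruction lies in $H^2(\mbz/2,\mbc^*)=0$); as $\widetilde H$ is invariant, being the unique ample generator, it descends to $Y$, which forces $\pi^*H_Y = \widetilde H$, i.e.\ $b := [\,\widetilde H : \pi^*H_Y\,] = 1$. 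This is what secures the lower bound: from $\sigma^*H_X = a\widetilde H$ and $\sigma_*\widetilde H = (2/a)H_X \in \mbz H_X$ one gets $a \mid 2$, and $m\widetilde H = ra\,\widetilde H$ gives $m = ra$, so in particular $m \ge r$.

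It remains to exclude $a = 2$, i.e.\ $m = 2r$, and this is the step I expect to be the \emph{main obstacle}, since $a=2$ is consistent with every divisibility relation produced so far. Here I would invoke Riemann--Roch on the Calabi--Yau threefold $Y$: the quantity $\chi(\mco_Y(H_Y)) = \tfrac16 H_Y^3 + \tfrac1{12}H_Y\cdot c_2(Y)$ must be an integer. Substituting $H_Y^3 = (-K_X)^3/m^3$ and $H_Y\cdot c_2(Y) = ((-K_X)^3+24)/m$ with $m = 2r$, one checks for the four families $X_1,\dots,X_4$ that $\chi(\mco_Y(H_Y))$ equals $\tfrac54,\ \tfrac32,\ \tfrac78,\ \tfrac54$ respectively, none of which is an integer; this contradiction forces $a = 1$ and hence $m = r$. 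Conceptually this is the manifestation on $Y$ of Lemma \ref{lem1}: the restriction $H_X|_{S_X}$ being primitive means the divisibility of $\psi^*H_W$ is \emph{exactly} $r$, and the Riemann--Roch integrality test is the most economical way to verify this for these specific Fano threefolds. With $m = r$ in hand, the two displayed identities are exactly the computations recorded in the first paragraph.
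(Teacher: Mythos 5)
Your proof is correct, but it reaches the key divisibility statement by a genuinely different route than the paper. The paper pins down the integer $k$ in $\psi^*(H_W)=kH_Y'$ by restricting to the smooth ample surface $S_Y\subset Y$: the ordinary Lefschetz theorem makes $H_Y'|_{S_Y}$ primitive in $H^2(S_Y,\mbz)_f$, while Lemma \ref{lem1} (the ad hoc Lefschetz statement for the singular pair $S\subset W$, proved via the Fano cover) makes $H_W|_S$ divisible by $r$; together these give only $r\mid k$, so the paper must then exclude \emph{every} $l\ge 2$ in $k=lr$ by integrality of $H_Y^3$ (for $X_1,X_3,X_4$) and of $\chi(Y,H_Y)=\tfrac{4+8l^2}{3l^3}$ (for $X_2$). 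You instead bypass Lemma \ref{lem1} entirely and work upstairs on the fiber product $\widetilde X$: descent of the ample generator along the \'etale cover $\pi\colon\widetilde X\ra Y$ gives $\pi^*H_Y=\widetilde H$, and the projection formula $\sigma_*\sigma^*H_X=2H_X$ gives $a\mid 2$, so $m=ra\in\{r,2r\}$ and only the single value $m=2r$ needs to be killed by Riemann--Roch (your four values $\tfrac54,\tfrac32,\tfrac78,\tfrac54$ are all computed correctly). Your approach buys a sharper a priori constraint and makes the ``divisibility exactly $r$'' transparent; the paper's approach buys independence from any descent formalism and reuses a lemma it needs anyway for its Lefschetz-type remark. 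One caveat you should address: the descent step requires $\tau^*\widetilde H\cong\widetilde H$ as line bundles, not merely in $\Pic(\widetilde X)_f$, so you need $\Pic(\widetilde X)$ torsion-free; this does hold because $\widetilde X$ is simply connected (a double cover of the simply connected Fano $X$ branched along a nonempty ample divisor), but it is not automatic from $h^2(\widetilde X)=1$ alone and deserves a sentence.
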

\begin{proof}
By Proposition \ref{prop1}, 
$$1 \leq h^2(W) \le  h^2(Y) \le h^2(X)=1,$$ 
so $W$, $Y$ have Picard number one.
Since $Y$ has Picard number one, $\psi^*(H_W) = k H'_Y$ for some ample generator $H'_Y$ of $\Pic(Y)$ ( $\simeq H^2(Y, \mbz)$) and a positive integer $k$. Note that $H_Y-H_Y'$ is a torsion element and that $H_Y'$ is primitive in $H^2(Y, \mbz)$.
We also note that $S_Y$ is a smooth ample divisor of $Y$.
By the Lefschetz hyperplane theorem, $H'_Y|_{S_Y}$ is primitive in $H^2(S_Y, \mbz)_f$.
By Lemma \ref{lem1}, $H_W|_S$ is divisible by $r$ in $H^2(S, \mbz)_f$.
So its image $(\psi|_{S_Y})^*(H_W|_S)$  in $H^2(S_Y, \mbz)_f$ is divisible by $r$.
Note
$$(\psi|_{S_Y})^*(H_W|_S) = \psi^*(H_W)|_{S_Y} =  k (H'_Y|_{S_Y}).$$
So $k$ is divisible by $r$. Let $k=lr$ for some positive integer $l$. We will show that $l=1$.
Note
$$H_Y^3 = {H_Y'}^3 =  \frac{1}{k^3} \psi^*(H_W)^3 = \frac{2}{k^3}  H_W ^3 = \frac{1}{k^3}  \varphi^*(H_W)^3 = \frac{1}{r^3 l^3} (-K_X^3) $$
and
$$ H_Y \cdot c_2(Y) =  \frac{1}{k}\psi^*(H_W) \cdot c_2(Y) =\frac{1}{r l}((-K_X^3) + 24).$$

For $X_1, X_3, X_4$, the condition of $H_Y^3$ being a positive integer requires that $l=1$.
For $X_2$, by the Riemman--Roch theorem, we have
$$\chi(Y, H_{Y}) = \frac{H_{Y}^3}{6} + \frac{H_Y \cdot c_2(Y)}{12} =  \frac{8}{ 6l^3} + \frac{32}{12 l}
=\frac{4+8l^2}{3l^3},$$
which should be an integer. So we have $l=1$ also in this case. Therefore,
$$H_Y^3 = \frac{1}{r^3} (-K_X^3)$$
and
$$ H_Y  \cdot c_2(Y) = \frac{1}{r}((-K_X)^3 + 24).$$

\end{proof}

By Proposition \ref{prop1} and the relation $e(Y) = 2 (h^{1,1}(Y) - h^{1,2}(Y))$, we can determine all the Hodge numbers of $Y$.
We list the invariants of the Calabi--Yau threefolds $Y$'s in Table \ref{t1}. It turns out that they are all new examples . See Appendix I of \cite{Kap} for a list of known examples of Calabi--Yau threefolds of Picard number one.

\begin{table}[ht]
\caption{Invariants of Calabi--Yau double coverings}
\centering
\begin{tabular}{c|cccc}
  \hline\hline
  $X_d$ & $H_Y^3$ & $H_Y \cdot c_2(Y)$ & $h^{1,1}(Y)$ & $h^{1,2}(Y)$ \\
  \hline
  $X_1$ & 4 & 28 & 1 & 45 \\
  $X_2$ & 8 & 32 & 1 & 33 \\
  $X_3$ & 2 & 20 & 1 & 37 \\
 $X_4$ & 4 & 28 & 1 & 45 \\
  \hline
\end{tabular}
\label{t1}
\end{table}

Note that the invariants of $Y_1$ and those of $Y_4$ overlap.
Consider the commutative diagram in the proof of Proposition \ref{prop1}.
For $X_1$, the branch locus of $\widetilde X_1 \ra X_1$ is a quadric section, thus $\widetilde X_1$ is a
$(2, 2, 4)$-weighted complete intersection of $\mbp(1,1,1,1,1,1, 2)$.
For $X_4$, the branch locus of $\widetilde X_4 \ra X_4$ is a quartic section, thus $\widetilde X_4$ is also a $(2, 2, 4)$-weighted complete intersection of $\mbp(1,1,1,1,1,1,2)$.
Therefore, $\widetilde X_1$ and $\widetilde X_4$ are in the same family. Since $Y_1$ and $Y_4$ are \'etale $\mbz_2$-quotients of $\widetilde X_1$ and $\widetilde X_4$ respectively, they have the same invariants.


\end{document}